\numberwithin{equation}{section}
\newtheorem{theorem}{Theorem}[section]
\newtheorem{proposition}[theorem]{Proposition}
\newtheorem{lemma}[theorem]{Lemma}
\newtheorem{corollary}[theorem]{Corollary}
\theoremstyle{definition}
\newtheorem{definition}[theorem]{Definition}
\theoremstyle{remark}
\newtheorem{remark}[theorem]{Remark}
\newtheorem{example}[theorem]{Example}
\newcommand{\CC}{\mathbb{C}}
\newcommand{\ZZ}{\mathbb{Z}}
\newcommand{\im}{\operatorname{im}}
\newcommand{\src}{\bm{s}}
\newcommand{\ran}{\bm{r}}
\newcommand{\supp}{\operatorname{supp}}
\newcommand{\Iso}{\operatorname{Iso}}
\newcommand{\Bisc}{\operatorname{Bis}_c}
\title[Tensor products of Steinberg algebras]{Tensor products of Steinberg algebras}
\author[S. W. Rigby]{Simon W. Rigby}
\address{Department of Mathematics: Algebra and Geometry, Ghent University, Belgium.}
\email{simon.rigby@ugent.be}
\begin{document}

\begin{abstract}
		We prove that $A_R(G) \otimes_R A_R(H) \cong A_R(G \times H)$, if $G$ and $H$ are Hausdorff ample groupoids. As part of the proof, we give a new universal property of Steinberg algebras. We then consider the isomorphism problem for tensor products of Leavitt algebras, and show that no diagonal-preserving isomorphism exists between $L_{2,R} \otimes L_{3,R}$ and  
	$L_{2,R} \otimes L_{2,R}$. Indeed, there are no unexpected diagonal-preserving isomorphisms between tensor products of finitely many Leavitt algebras. We give an easy proof that every $*$-isomorphism of Steinberg algebras over the integers preserves the diagonal, and it follows that $L_{2,\mathbb{Z}} \otimes L_{3,\mathbb{Z}} \not \cong L_{2,\mathbb{Z}} \otimes L_{2,\mathbb{Z}}$ (as $*$-rings).
\end{abstract}

\keywords{Steinberg algebras, ample groupoids, Leavitt algebras, diagonal-preserving isomorphisms}

\subjclass[2010]{16S10, 16S99, 22A22}

\maketitle

\section{Introduction}

Steinberg algebras, introduced in \cite{steinberg0} and~\cite{clark2}, are convolution $R$-algebras defined over a locally compact, \'etale groupoid with totally disconnected, Hausdorff unit space (known as an ample groupoid).
When the input is Paterson's \cite{paterson} universal groupoid of an inverse semigroup $S$, the output is the inverse semigroup algebra $RS$. When the input is the boundary path groupoid \cite{kumjian1997graphs} of a graph $E$, the output is the Leavitt path algebra $L_R(E)$.  Groupoid techniques have had a strong influence on these subjects in recent years. For instance, the structure theory and ideal theory of Steinberg algebras is highly applicable to Leavitt path and inverse semigroup algebras \cite{clark2016ideals,orloff2016using,steinberg-simplicity, steinberg-primeness}. Steinberg algebras have also led to new insights and progress in $C^*$-algebras, especially on the topic of simple groupoid $C^*$-algebras (see \cite{brown2014simplicity, brown2017purely,clark2018simplicity}). Besides those already mentioned, there are interesting examples of Steinberg algebras that arise out of higher-rank graphs, self-similar graphs, and various kinds of dynamical systems \cite{beuter2017interplay,brown2014simplicity, clark2018generalized,clark2018simplicity,clark2017kumjian}.

Two very basic constructions, the {disjoint union} and the {product}, can be used to combine a pair of groupoids into a new groupoid.
It is intuitive and easy  to prove that the Steinberg algebra $A_R(G \sqcup H)$ is isomorphic to the direct sum of two ideals, $A_R(G) \oplus A_R(H)$, for any pair of ample groupoids $G$ and~$H$. On the other hand, there are examples to suggest that taking products of groupoids has the effect that \begin{equation} \label{main} A_R(G \times H) \cong A_R(G)\otimes_R A_R(H).\end{equation} An analogous result for groupoid $C^*$-algebras appears in \cite[Lemma 2.10]{austin2018groupoid}. When $G$ and $H$ are discrete, or when $G$ is discrete and principal, then (\ref{main}) is a straightforward calculation with bases or matrix units respectively. However, in the absence of some special property of the groupoids (for example, being discrete, or minimal and effective) it is more challenging to verify (\ref{main}). For this, it is useful to have a slightly more versatile universal property for Steinberg algebras than the one provided in \cite[Proposition~2.3]{clark}. We give such a universal property in Theorem~\ref{univ}.

Recently, there have been some investigations of constructions that produce a new groupoid from a groupoid $G$ and a group $\Gamma$. In~\cite[Theorem 3.4]{AHLS}, it is shown that the Steinberg algebra of the skew product groupoid of $G$ by $\Gamma$ (where $G$ is graded by $\Gamma$) is graded isomorphic to the Cohen-Montgomery {smash product} ring $A_R(G)\# \Gamma$. In \cite[Proposition 4.2]{homology}, it is shown that the Steinberg algebra of the semidirect product groupoid of $G$ and $\Gamma$ (where $\Gamma$ acts on $G$) is graded isomorphic to a partial skew group ring $A_R(G) \rtimes \Gamma$. Graded matrix rings with entries in a Steinberg algebra are also shown in \cite[\S4.2]{homology} to be graded isomorphic to the Steinberg algebras of certain graded groupoids constructed from $G$ and $\Gamma$. Our present investigation into the Steinberg algebras of product groupoids naturally falls into this theme, and it has some interesting applications.

The classical Leavitt algebra $L_{n,R}$ ($2 \le n < \infty$) is the universal $R$-algebra $A$ satisfying $A^n  \cong A$  as $A$-modules and $A^m \not \cong A$ for any $2 \le m < n$. When  interest in these algebras was revived in the 2000s, after the discovery of Leavitt path algebras in \cite{abrams2005leavitt} and \cite{ara2007nonstable},  there was an open question for several years to determine whether $L_{2,R} \otimes L_{2,R} \cong L_{2,R}$. This was motivated by Elliot's Theorem that says, among other things, that $\mathcal{O}_2 \otimes \mathcal{O}_2 \cong \mathcal{O}_2$, where $\mathcal{O}_2$ is the Cuntz $C^*$-algebra generated by two partial isometries. We now know, from Ara and Corti\~nas's calculation of the Hochschild homology \cite{AC},  that no ring isomorphism exists between $L_{2,K} \otimes L_{2,K}$ and $L_{2,K}$ for any field $K$. Some alternative proofs have also been given, using different homological arguments (see \cite[\S3.5]{abrams} and \cite[\S6.4]{LPAbook}). Furthermore, by the results of Brownlowe and S{\o}rensen \cite{BS}, there is no $*$-ring monomorphism $L_{2,\ZZ} \otimes L_{2,\ZZ} \hookrightarrow L_{2,\ZZ}$. However, it is still unknown if there exists a ring monomorphism $L_{2,R} \otimes L_{2,R} \hookrightarrow L_{2,R}$ for some ring of scalars $R$.

In \cite[\S6.5]{abrams} and \cite[Question 7.3.4.]{LPAbook}, a similar question was posed: Is $L_{2,R}\otimes L_{3,R} \cong L_{2,R} \otimes L_{2,R}$? While we are not able to answer this question completely, we do rule out the possibility that there is an isomorphism that preserves the diagonal subalgebras. That is, if $R$ is indecomposable, then there is no isomorphism $L_{2,R}\otimes L_{3,R} \to L_{2,R}\otimes L_{2,R}$ that maps $D_{2,R} \otimes D_{3,R}$ into $D_{2,R} \otimes D_{2,R}$ (or whose inverse maps $D_{2,R} \otimes D_{2,R}$ into $D_{2,R} \otimes D_{3,R}$). 
More generally, we show in Proposition \ref{no-diag-iso} that if there exists a diagonal-preserving isomorphism $\bigotimes_{i = 1}^p L_{n_i,R} \to \bigotimes_{i = 1}^q L_{m_i,R}$, then $p = q$ and $(n_1, \dots, n_p) = (m_{\sigma(1)}, \dots, m_{\sigma(p)})$ for some permutation $\sigma \in S_p$. The proof uses important theorems of Matui \cite[Theorem 5.12]{matui} on products of shifts of finite type, and Steinberg \cite[Corollary 5.8]{steinberg-new} on groupoid reconstruction (for which \cite[Corollary 3.2]{carlsen2017diagonal} or \cite[Theorem 3.1]{ara2017reconstruction} could also have been used).
 In~\cite{carlsen}, Carlsen proved that every $*$-isomorphism of integral Leavitt path algebras preserves the diagonal. We show that the same holds for integral Steinberg algebras. This leads to Corollary \ref{corr 3}, that $\bigotimes_{i = 1}^p L_{n_i, \ZZ}$ is  $*$-isomorphic to $\bigotimes_{j = 1}^q L_{m_j,\ZZ}$ if and only if $p = q$ and $(n_1, \dots, n_p) = (m_{\sigma(1)}, \dots, m_{\sigma(p)})$ for some permutation $\sigma \in S_p$.

The paper is organised in six sections. In \S\ref{s: prelim}, we set up our conventions and terminology for groupoids, and give the definition of a Steinberg algebra. In \S\ref{s: univ prop}, we establish a new universal property for Steinberg algebras. In \S\ref{s: tensors}, we prove that $A_R(G) \otimes A_R(H)\cong A_R(G \times H)$ for any pair of Hausdorff ample groupoids. In \S\ref{s: stars}, we show that every projection in an integral Steinberg algebra belongs to the diagonal subalgebra. Consequently, if there is a $*$-isomorphism $A_\ZZ(G) \cong A_\ZZ(H)$ and either $G$ or $H$ is effective, then $G \cong H$. In \S\ref{s: leavitts}, we make some progress on the isomorphism problem for tensor products of Leavitt algebras.
\section{Preliminaries} \label{s: prelim}

A groupoid is a small category in which every morphism is invertible. For a groupoid $G$, we use the following conventions: the \textit{unit space} is $G^{(0)} = \{xx^{-1} \mid x \in G \} = \{x^{-1}x \mid x \in G\}$, the \textit{source} map is $\src: G \to G^{(0)}$, $\src(x) = x^{-1} x$, the \textit{range} map is $\ran: G \to G^{(0)}$, $\ran(x) = xx^{-1}$,  and the set of \textit{composable pairs} is $G^{(2)} = \{(x,y) \in G \times G \mid \src(x) = \ran(y) \}$. We use the notation $G_u = \src^{-1}(u)$, $G^v = \ran^{-1}(v)$, and $G_u^v = G_u \cap G^v$, if $u,v \in G^{(0)}$ are units. The group $G_u^u$ is called the \textit{isotropy group} based at~$u$. The \textit{isotropy subgroupoid} of $G$ is $\Iso(G) = \{x \in G \mid \src(x) = \ran(x) \} = \bigcup_{u \in G^{(0)}} G_u^u$. We say that a groupoid $G$ is \textit{transitive} if $G_u^v \ne \emptyset$ for every pair of units $u, v \in G^{(0)}$, and $G$ is \textit{principal} if $\Iso(G) = G^{(0)}$.

A topological groupoid is a groupoid in which the multiplication and inversion maps are continuous (and therefore the source and range maps are continuous too). An \textit{ample groupoid} is a topological groupoid $G$ in which $\src$ is a local homeomorphism and $G^{(0)}$ is locally compact, totally disconnected, and Hausdorff. Equivalently, an ample groupoid is a topological groupoid $G$ in which $\src$ is an open map and the topology on $G$ has a basis of \textit{compact open bisections}; that is, compact open subsets of $G$ on which the restrictions of $\src$ and $\ran$ are injective. We say that an ample groupoid is \textit{effective} if the interior of the isotropy subgroupoid is the unit space; that is, if $\Iso(G)^\circ = G^{(0)}$. 

The \textit{product} of two groupoids $G_1 \times G_2$ has unit space $G_1^{(0)}\times G_2^{(0)}$, and the following structure maps. For all $(x_1, y_1) \in G_1^{(2)}$ and $(x_2, y_2) \in G_2^{(2)}$,
\begin{align*}
\src(x_1, x_2) &= (\src(x_1),\src(x_2)), & (x_1, x_2)^{-1} &= (x_1^{-1}, x_2^{-1}),\\ \ran(x_1,x_2) &= (\ran(x_1),\ran(x_2)),
 & (x_1, x_2)(y_1, y_2) &= (x_1y_1,x_2 y_2).
\end{align*}
If $G_1$ and $G_2$ are topological groupoids, then $G_1 \times G_2$ equipped with the product topology is again a topological groupoid. The product of two ample groupoids is again an ample groupoid.

Unless specified otherwise, the assumption is that $R$ is an arbitrary commutative ring with~1. All tensor products in this paper are over $R$. If $A$ and $B$ are $*$-algebras over $R$, then $A \otimes B$ is a $*$-algebra with $(\sum a_i \otimes b_i)^* = \sum a_i^* \otimes b_i^*$.

 In an ample groupoid $G$, let $\Bisc(G)$ be the set of compact open bisections in $G$, which is an inverse semigroup under the product and inverse operations: 
	\begin{align*}
	BC = \{xy \mid x \in B, y \in C, \ran(y) = \src(x)\}, &&	B^{-1} = \{x^{-1} \mid x \in B\}.	
	\end{align*}
	The set of compact open subsets of $G^{(0)}$ is denoted by $\Bisc(G^{(0)})$. It is the set of idempotents in the semigroup $\Bisc(G)$. For any $B \in \Bisc(G)$, let $\bm{1}_B: G \to R$ be the characteristic function of $B$. 
 
\begin{definition}[Steinberg algebras] \cite{steinberg0} \label{st-def}
Let $G$ be an ample groupoid. The \textit{Steinberg algebra of $G$ over~$R$}, denoted by $A_R(G)$, is the $R$-submodule of $R^G$ generated by the characteristic functions
\[
\{\bm{1}_B \mid B \in \Bisc(G)\},
\]
and equipped with the \textit{convolution product}:
\begin{align} \label{convolve}
f*g(x) = \sum_{y \in G_{\src(x)}} f(xy^{-1})g(y) = \sum_{\substack{(z,y) \in G^{(2)},\\ zy = x}} f(z)g(y),
\end{align}
for all $f, g \in A_R(G)$ and $x \in G$.
The \textit{diagonal subalgebra} of $A_R(G)$ is the subalgebra $D_R(G)$ generated by the commuting idempotents:
\[
\{\bm{1}_U \mid U \in \Bisc(G^{(0)})\}.
\]
\end{definition}
If $A, B \in \Bisc(G)$ then (\ref{convolve}) yields $\bm{1}_A * \bm{1}_B = \bm{1}_{AB}$. In general, \begin{align} \notag
D_R(G) &= \{f \in A_R(G) \mid \supp(f) \subseteq G^{(0)}\} \cong A_R(G^{(0)}), \\
 \label{lccs}
A_R(G) &\supseteq \{f: G \to R \mid f \text{ is locally constant, compactly supported}\}. 	
 \end{align}
If $G$ is effective, then $D_R(G)$ is a maximal commutative subalgebra of $A_R(G)$ \cite[Corollary~2.4]{steinberg-new}.  If $G$ is Hausdorff, then (\ref{lccs}) is an equality. The Steinberg algebra $A_R(G)$ is unital if and only if $G^{(0)}$ is compact \cite[Proposition 4.11]{steinberg0}. However, $A_R(G)$ is always a locally unital ring (every finitely generated subalgebra is unital).
If $R$ is a commutative unital ring with an involution $\overline{\phantom{f}}: R \to R$, then $A_R(G)$ is a $*$-algebra, with the involution $^*: A_R(G) \to A_R(G)$ given by:
\begin{align} \label{invol}
f^*(x) = \overline{f(x^{-1})} &&\text{ for all } f \in A_R(G), \ x \in G.
\end{align}

We briefly discuss graded algebras and graded groupoids, because it is worthwhile to make one or two comments about these later.	Let $\Gamma$ be a group. We say that an $R$-algebra  $A$ is \emph{$\Gamma$-graded} if it decomposes into $R$-submodules $A = \bigoplus_{\gamma \in \Gamma} A_\gamma$, with the property that $A_{\gamma_1} A_{\gamma_2} = A_{\gamma_1 \gamma_2}$ for all $\gamma_1, \gamma_2 \in \Gamma$. The $R$-submodules $A_\gamma$ are called \emph{homogeneous components}. A homomorphism of $\Gamma$-graded $R$-modules $\varphi: A \to B$ is a \emph{$\Gamma$-graded homomorphism} if $\varphi(A_\gamma) \subseteq \varphi(B_\gamma)$ for all $\gamma \in \Gamma$.

We say that a topological groupoid $G$ is \emph{$\Gamma$-graded} if there is a homomorphism $c: G \to \Gamma$ that is continuous with respect to the discrete topology on $\Gamma$. The map $c$ is called a \emph{cocycle}. The clopen sets $c^{-1}(\gamma)$ are called \emph{homogeneous components}, and the collection $\{c^{-1}(\gamma) \mid \gamma \in \Gamma\}$ is a partition of $G$ with the property that $c^{-1}(\gamma)c^{-1}(\delta) = c^{-1}(\gamma \delta)$ for all $\gamma, \delta \in \Gamma$.

\begin{example}  \label{gr-example} Let $\Gamma$ and $\Delta$ be groups.
\begin{enumerate} \item \cite[Lemma 3.1]{clark3}
		If $G$ is a $\Gamma$-graded ample groupoid, then $A_R(G)$ is a $\Gamma$-graded algebra, with the homogeneous components: \[A_R(G)_\gamma = \{f \in A_R(G) \mid f^{-1}(R \setminus \{0\}) \subseteq c^{-1}(\gamma)\}.\]
 \item \cite[Example 1.16]{hazrat2016graded} If $A$ is a $\Gamma$-graded $R$-algebra and $B$ is a $\Delta$-graded $R$-algebra, then $A \otimes B$ is a $(\Gamma \times \Delta)$-graded $R$-algebra, with the homogeneous components: \[(A \otimes B)_{(\gamma, \delta)} = A_\gamma \otimes B_{\delta}.\]	
 If $\Gamma = \Delta$ is abelian, then the group homomorphism $\Gamma \times \Gamma \to \Gamma$ sending $(\gamma_1, \gamma_2)$ to $\gamma_1 \gamma_2$ induces a quotient grading on $A \otimes B$, which makes it a $\Gamma$-graded algebra with the homogeneous components: \[(A \otimes B)_\gamma = \sum_{\mu \in \Gamma} A_{\gamma \mu} \otimes A_{\mu^{-1}}.\]
 \item If $G$ and $H$ are topological groupoids graded by continuous cocycles $c: G \to \Gamma$ and $c: H \to \Delta$, then $G \times H$ is a $(\Gamma \times \Delta)$-graded topological groupoid, as determined by the cocycle:
 	\begin{align*}
 	c: G \times H \to \Gamma	 \times \Delta, && c(x,y) = (c(x),c(y)).
 	\end{align*}
 	If $\Gamma = \Delta$ is abelian, then the group homomorphism $\Gamma \times \Gamma \to \Gamma$ sending $(\gamma_1, \gamma_2)$ to $\gamma_1 \gamma_2$ induces a quotient grading on $G \times H$, which is determined by the cocycle:
 	\begin{align*}
 	c: G \times H \to \Gamma	, && c(x,y) = c(x)c(y).
 	\end{align*}
\end{enumerate}
\end{example}


\section{A universal property} \label{s: univ prop}

In this section, we give a modified version of \cite[Proposition 2.3]{clark}, where it was shown that Steinberg algebras are universal algebras generated by {representations} of the set of homogeneous compact open bisections. We show that $A_R(G)$ is universal for representations of any ``suitable" set $\mathcal{B}$ of compact open bisections. The conditions we impose on $\mathcal{B}$ are satisfied by some useful bases in well-known ample groupoids (see Example \ref{ex}).

\begin{definition} \cite{clark2}
	Let $G$ be a Hausdorff ample groupoid, and let $\mathcal{B}$ be a subsemigroup of $\Bisc(G)$. Let $Q$ be an $R$-algebra. A \textit{representation} of $\mathcal{B}$ in $Q$ is a family
	\[
	\{t_B \mid B \in \mathcal{B} \} \subseteq Q
	\]
	satisfying the following relations:
	\begin{enumerate}[\rm (R1)]
		\item $t_\emptyset = 0$ if $\emptyset \in \mathcal{B}$;
		\item $t_A t_B = t_{AB}$ for all $A,B \in \mathcal{B}$;
		\item $\sum_{B \in F} t_B = t_{\bigcup F}$ for all finite $F \subseteq \mathcal{B}$ such that $\bigcup F \in \mathcal{B}$ and $B \cap B' = \emptyset$ for distinct $B, B' \in F$.\\
	\end{enumerate}
\end{definition}

\begin{remark} \label{remark-clark}
In \cite[Theorem 3.10]{clark2} and \cite[Theorem 3.11]{steinberg-1}, it is shown that when $\mathcal{B} = \Bisc(G)$ then (R3) is equivalent to the easier-to-check condition: $t_{U \cup U'} = t_U + t_{U'}$ for all disjoint pairs $U, U' \in \Bisc(G^{(0)})$. For more general $\mathcal{B}$, however, we need the full strength of (R3) as stated above.
\end{remark}

Here is our universal property for Steinberg algebras, which extends \cite[Proposition 2.3]{clark} and its predecessors \cite[Theorem 3.11]{steinberg-1} and \cite[Theorem~3.10]{clark2}.

\begin{theorem} \label{univ}
	Let $G$ be a Hausdorff ample groupoid. Suppose $\mathcal{B}$ is a subsemigroup of $\Bisc(G)$, and a basis for the topology on $G$, and also satisfies the properties:
		\begin{itemize}
			\item [\rm (Int)] $\emptyset\in \mathcal{B}$ and $\mathcal{B}$ is closed under finite intersections;
			\item [\rm (RC)] For all $B, C \in \mathcal{B}$, the relative complement $B\setminus C$ is a finite union of disjoint sets in $\mathcal{B}$. 
		\end{itemize}
	Then $\{\bm{1}_B \mid B \in \mathcal{B}\}$ is a representation of $\mathcal{B}$ in $A_R(G)$, and it generates $A_R(G)$ as an $R$-module. Moreover, every $R$-algebra $Q$ containing a representation $\{t_B \mid B \in \mathcal{B}\}$ admits a unique homomorphism $\pi: A_R(G) \to Q$ such that $\pi(\bm{1}_B) = t_B$ for all $B \in \mathcal{B}$.
\end{theorem}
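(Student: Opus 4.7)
The plan is to proceed in three steps: check that $\{\bm{1}_B \mid B \in \mathcal{B}\}$ satisfies the axioms of a representation, show it spans $A_R(G)$ as an $R$-module, and finally establish the universal factorization. The first step is immediate: (R1) is $\bm{1}_\emptyset = 0$, (R2) is the convolution identity $\bm{1}_A * \bm{1}_B = \bm{1}_{AB}$ already recorded after Definition \ref{st-def}, and (R3) is just additivity of characteristic functions on disjoint unions.

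For the generating claim, the key point is that $G$ is Hausdorff, so by (\ref{lccs}) the algebra $A_R(G)$ consists of locally constant, compactly supported $R$-valued functions, and is therefore $R$-spanned by $\{\bm{1}_C \mid C \in \Bisc(G)\}$. It suffices to express each such $\bm{1}_C$ as an $R$-linear combination of $\bm{1}_B$'s with $B \in \mathcal{B}$. Since $\mathcal{B}$ is a basis and $C$ is compact, I can cover $C$ by finitely many $B_1, \dots, B_n \in \mathcal{B}$ with $B_i \subseteq C$, and then disjointify: $C = \bigsqcup_{i} D_i$, where $D_i = B_i \setminus \bigcup_{j<i} B_j = \bigcap_{j<i}(B_i \setminus B_j)$. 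Each $B_i \setminus B_j$ is a finite disjoint union of elements of $\mathcal{B}$ by (RC), and distributing the finite intersection over these disjoint unions while invoking (Int) shows that each $D_i$ is itself a finite disjoint union of elements of $\mathcal{B}$.

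The third step is the heart of the theorem, and where I expect the main obstacle to lie. Uniqueness of $\pi$ is automatic from the generation just proved. For existence, the strategy is to define $\pi$ on the spanning set by $\bm{1}_B \mapsto t_B$ and extend $R$-linearly; the nontrivial point is well-definedness, namely that $\sum_i r_i \bm{1}_{B_i} = 0$ with $B_i \in \mathcal{B}$ forces $\sum_i r_i t_{B_i} = 0$ in $Q$. The tool is a common refinement of the family $\{B_1,\dots,B_n\}$. For each $S \subseteq \{1, \dots, n\}$, set $A_S = \bigcap_{i \in S} B_i \setminus \bigcup_{i \notin S} B_i$. By (Int) the positive intersection lies in $\mathcal{B}$, and by (RC) combined with distributivity, $A_S$ is a finite disjoint union $\bigsqcup_k C_{S,k}$ of elements of $\mathcal{B}$. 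The whole collection $\{C_{S,k}\}$ is pairwise disjoint, and $B_i = \bigsqcup_{S \ni i,\, k} C_{S,k}$ for each $i$. Applying (R3) to both sides (and (R1) to harmlessly discard any empty $C_{S,k}$), the relation $\sum_i r_i \bm{1}_{B_i} = 0$ becomes $\sum_{S,k} \bigl(\sum_{i \in S} r_i\bigr) \bm{1}_{C_{S,k}} = 0$; since indicator functions of disjoint nonempty subsets of $G$ are $R$-linearly independent, this forces $\sum_{i \in S} r_i = 0$ for every $S$ with $A_S \ne \emptyset$, and the same decomposition applied to the $t_B$'s then yields $\sum_i r_i t_{B_i} = 0$. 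Multiplicativity of $\pi$ on the spanning set reduces to $t_A t_B = t_{AB} = \pi(\bm{1}_{AB}) = \pi(\bm{1}_A * \bm{1}_B)$ using (R2) together with the subsemigroup hypothesis on $\mathcal{B}$, and extends to all of $A_R(G)$ by $R$-bilinearity.
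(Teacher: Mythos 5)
Your proof is correct and follows essentially the same route as the paper's: both arguments disjointify a finite family of sets from $\mathcal{B}$ into Boolean atoms, use (Int) and (RC) to write each atom as a finite disjoint union of members of $\mathcal{B}$, and then apply (R3) to transfer the resulting identities to the $t_B$'s. The only organizational difference is that you establish well-definedness of $\pi$ by checking that every $R$-linear relation $\sum_i r_i \bm{1}_{B_i}=0$ is preserved (together with a self-contained covering argument for the spanning claim, where the paper cites an external corollary), whereas the paper first reduces to expressions over pairwise disjoint families and then compares two such expressions via their common refinement; these are interchangeable bookkeeping devices.
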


\begin{proof}
	Clearly the set $\{\bm{1}_B \mid B \in \mathcal{B}\}$ satisfies (R1) and (R3). Since $\bm{1}_B * \bm{1}_C = \bm{1}_{BC}$, it also satisfies (R2). Therefore, it is a representation of $\mathcal{B}$ in $A_R(G)$. 
	
	\textit{Claim 1: } We claim that for each $f \in A_R(G)$, there exists a finite set $F \subseteq \mathcal{B}$ whose elements are mutually disjoint, such that $f$ is a linear combination of the set $\{\bm{1}_B \mid B \in F\}$. By \cite[Corollary~1.14]{rigby}, there is an expression $f = \sum_{i = 1}^n s_i \bm{1}_{D_i}$ where $s_1, \dots, s_n \in R \setminus \{0\}$ and $D_1, \dots, D_n \in \mathcal{B}$. Assume that no two $D_i$ in the expression are the same. For each $s \in \im(f) \setminus \{0\}$ we have:
	\begin{align*} 
	f^{-1}(s) &= \bigcup_{\substack{I \subseteq \{1, \dots, n\} \\ s = \sum_{i \in I}{s_i}}}\  B_I, & \text{where } \qquad B_I &= \bigcap_{\substack{i\in I \\ j \notin I}} D_i \setminus D_j.
	\end{align*}
	It follows from our assumptions that each nonempty $B_I$ is a finite union of disjoint members of $\mathcal{B}$; say $B_I = \bigcup{F_I}$ where $F_I \subset \mathcal{B}$ is a set of mutually disjoint basic open sets. Moreover, if $I, J \subseteq \{1, \dots, n\}$ and $I \ne J$, then $B_I \cap B_J = \emptyset$. To complete the proof of the claim, note that 
	\[
	f = \sum_{s \in \im f \setminus \{0\}} s \bm{1}_{f^{-1}(s)} = \sum_{s \in \im f \setminus \{0\}} \sum_{\substack{I \subseteq \{1, \dots, n\} \\ s = \sum_{i \in I}{s_i}}} s \bm{1}_{B_I} 
	 = \sum_{s \in \im f \setminus \{0\}} \sum_{\substack{I \subseteq \{1, \dots, n\} \\ s = \sum_{i \in I}{s_i}}} \sum_{B \in F_I} s \bm{1}_{B}.
	\]
	
	The next claim	follows the same steps as \cite[Theorem~3.10]{clark2}, but we write out the proof for ease of reference.
	
	\textit{Claim 2: }  Let $Q$ be an $R$-algebra containing a representation $\{t_B \mid B \in \mathcal{B}\}$ of $\mathcal{B}$.  We claim that the following mapping is well-defined:
	\begin{align} \label{pi-map}
	\pi: A_R(G) \to Q, && \sum_{B \in F} r_B \bm{1}_B \mapsto \sum_{B \in F} r_B t_B
	\end{align}
	for all finite $F \subseteq \mathcal{B}$ whose elements are mutually disjoint, and all scalars $r_B \in R \setminus \{0\}$. From the first claim, each element of $A_R(G)$ has at least one expression $f = \sum_{B \in F} r_B \bm{1}_B$. Now suppose $f \in A_R(G)$ has two such expressions:
	\begin{equation*}
	\sum_{B \in F} r_B \bm{1}_B = f = \sum_{B' \in F'} s_{B'} \bm{1}_{B'}, 
	\end{equation*}
	where	$F, F' \subseteq \mathcal{B}$, and $A,B \in F$ or $A, B \in F'$ implies  $A \cap B = \emptyset$, and $r_B, s_{B'} \in R$ for all $B \in F$, $B' \in F'$.
	It is necessary to show that
	\begin{equation*}
		\sum_{B \in F} r_B t_B = \sum_{B' \in F'} s_{B'} t_{B'}.
	\end{equation*} 
	As in \cite{clark2}, let $K = \{B \cap B' \mid B \in F,\ B' \in F',\ B \cap B' \ne \emptyset\}$ and notice that $K \subseteq \mathcal{B}$. If $B \in F$ then $B = \bigsqcup \{W \in K \mid W \subseteq B\}$. Applying (R3), we get that $t_B = \sum_{W \in K, W \subseteq B} t_W$ for every $B \in F$. Similarly, $t_{B'} = \sum_{W \in K, W \subseteq B'}t_W$ for every $B' \in F'$. Therefore,
	\begin{align*}
	 \sum_{B \in F} r_B t_B = \sum_{B \in F} \sum_{\substack{W \in K\\ W \subseteq B}} r_B t_W &= \sum_{W \in K} \Bigg(\sum_{\substack{B \in F\\ W \subseteq B}} r_B t_W\Bigg), \text{ and }  \\
	 \sum_{B' \in F'} s_{B'} t_{B'} =  \sum_{B' \in F'} \sum_{\substack{W \in K\\ W \subseteq B'}}s_{B'}t_W &=\sum_{W \in K} \Bigg(\sum_{\substack{B' \in F' \\ W \subseteq B'}} s_{B'} t_W\Bigg).
	\end{align*}
	All that remains is to show that $\sum_{B \in F, W \subseteq B} r_B t_W = \sum_{B' \in F', W \subseteq B'} s_{B'}t_W$ for all $W \in K$.
	To do so, notice that $f$ is constant and equal to $\sum_{B\in F, W \subseteq B}r_B = \sum_{B' \in F', W \subseteq B'}s_{B'}$ on $W$. We conclude that (\ref{pi-map})  defines the function $\pi$ unambiguously.
	
	\textit{Claim 3:} We claim that $\pi$ is a homomorphism. Let $f, g \in A_R(G)$. Write $f$ and $g$ as linear combinations of characteristic functions of sets in $\mathcal{B}$. Using the fact that $\bm{1}_B * \bm{1}_C = \bm{1}_{BC}$ for all $B, C \in \mathcal{B}$, and $t_B t_C = t_{BC}$ for all $B, C \in \mathcal{B}$ by (R2), we can conclude that $\pi(f*g) = \pi(f)\pi(g)$.
	
	The conclusion is that $A_R(G)$ enjoys the universal property in the statement of the theorem.
\end{proof}

\begin{example}  \label{ex}
	Here are a few examples of Hausdorff ample groupoids $G$ having a special subsemigroup $\mathcal{B} \subseteq \Bisc(G)$ that forms the basis for the topology on $G$ and has the properties (Int) and (RC).
	\begin{enumerate}
		\item	If $G$ is graded by a group, let $\mathcal{B}$ be the set of all homogeneous compact open bisections.
		\item If $G$ and ${H}$ are two ample groupoids, let $\mathcal{B}$ be the basis for $G \times {H}$ consisting of products of compact open bisections $A \times B$, where $A \in \Bisc(G)$ and $B \in \Bisc(H)$. This example is an important one that is revisited in Lemma \ref{K-lemma}.
		\item If $G_E$ is the boundary path groupoid of a directed graph $E$, let $\mathcal{B}$ be the basis of sets \begin{align*} \label{basic sets def}
		{Z}(\alpha, \beta) &= \big\{(\alpha x, |\alpha| - |\beta|, \beta x) \mid x \in r(\alpha)\partial E \big\},\\
		{Z}(\alpha, \beta, F) &= {Z}(\alpha, \beta) \setminus \bigcup_{e \in F} {Z}(\alpha e, \beta e),
		\end{align*}
		where $\alpha$ and $\beta$ are arbitrary finite paths with a common range, and $F$ is an arbitrary finite subset of edges emitted by the range of $\alpha$. By \cite[Lemma~2.15]{rigby}, $\mathcal{B}$ is closed under finite intersections. To prove that $\mathcal{B}$ has property (RC),  we can dismiss the trivial cases where $Z( \alpha, \beta, F) \setminus Z(\gamma, \delta, H)$ is equal to $\emptyset$ or $Z(\alpha, \beta, F)$.
		 Otherwise, by \cite[Lemma~2.15]{rigby}, $Z(\alpha, \beta,F) \setminus Z(\gamma, \delta, H)$ is equal to one of the following:
		 \begin{align*} Z(\alpha, \beta, F) \setminus Z(\alpha, \beta, F \cup H) =\ & \bigsqcup_{e \in H \setminus F} Z(\alpha e, \beta e), \text{ or}\\
		Z(\alpha, \beta, F) \setminus Z(\alpha \kappa, \beta \kappa, H)  =\ & Z(\alpha, \beta, F \cup \{\kappa_1\}) \sqcup Z(\alpha \kappa_1, \beta \kappa_1, \{\kappa_2\}) \sqcup \dots \\ & \sqcup Z(\alpha \kappa_1 \dots \kappa_{|\kappa|-1}, \beta \kappa_1 \dots \kappa_{|\kappa|-1}, \{\kappa_{|\kappa|}\}) \\
		& \sqcup \bigsqcup_{e \in H} Z(\alpha \kappa e, \beta \kappa e). 
		\end{align*}
	\end{enumerate}
\end{example}

\section{Tensor products of Steinberg algebras} \label{s: tensors}

We first give an easy application of the universal property. A version of Proposition \ref{scalars} for Leavitt path algebras appears in \cite[Corollary~1.5.14]{LPAbook}.

\begin{proposition} \label{scalars}
	Let $S$ be a commutative unital $R$-algebra. If $G$ is any Hausdorff ample groupoid, then $S \otimes_R A_R(G) \cong A_S(G)$ as $S$-algebras.
\end{proposition}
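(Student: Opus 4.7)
The plan is to apply the universal property of Steinberg algebras (Theorem~\ref{univ}) with $\mathcal{B} = \Bisc(G)$ to build an $S$-algebra homomorphism $\pi: A_S(G) \to S \otimes_R A_R(G)$, and then construct an inverse via the universal property of the tensor product. First observe that $\Bisc(G)$ satisfies (Int) and (RC): because $G$ is Hausdorff and ample, compact open bisections are clopen, finite intersections of them are again compact open bisections, and for $B, C \in \Bisc(G)$ the relative complement $B \setminus C = B \setminus (B \cap C)$ is a clopen subset of the compact open bisection $B$, hence lies in $\Bisc(G)$.

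Next, I would check that the family $\{t_B := 1_S \otimes \bm{1}_B \mid B \in \Bisc(G)\}$ is a representation of $\Bisc(G)$ in the $S$-algebra $S \otimes_R A_R(G)$. Relations (R1) and (R2) are immediate from $\bm{1}_\emptyset = 0$ and $\bm{1}_A \ast \bm{1}_B = \bm{1}_{AB}$, and (R3) is inherited from the identity $\bm{1}_{\bigcup F} = \sum_{B \in F} \bm{1}_B$ in $A_R(G)$ for any finite disjoint $F \subseteq \Bisc(G)$ with $\bigcup F \in \Bisc(G)$, simply by tensoring with $1_S$. Applying Theorem~\ref{univ} (with $S$ in place of $R$) then gives a unique $S$-algebra homomorphism
\[
\pi: A_S(G) \longrightarrow S \otimes_R A_R(G), \qquad \pi(\bm{1}_B) = 1_S \otimes \bm{1}_B.
\]

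For the inverse, I would define an $R$-algebra homomorphism $\phi: A_R(G) \to A_S(G)$ by sending each $f \in A_R(G)$ to itself viewed as an $S$-valued function via the structure map $R \to S$ (well-defined on generators and multiplicative by (R2)). The $R$-bilinear map $S \times A_R(G) \to A_S(G)$ given by $(s, f) \mapsto s \cdot \phi(f)$ then factors through an $S$-linear map
\[
\psi: S \otimes_R A_R(G) \longrightarrow A_S(G), \qquad s \otimes f \mapsto s\,\phi(f),
\]
which is multiplicative because $S$ acts centrally on $A_S(G)$.

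The verifications $\psi \circ \pi = \id$ and $\pi \circ \psi = \id$ then reduce to checking on the generating sets $\{\bm{1}_B\}$ and $\{s \otimes \bm{1}_B\}$ respectively, which is immediate from the definitions. The only conceptual step is the construction of $\pi$ via the universal property; once Theorem~\ref{univ} is in place, there is no real obstacle, as the relations in $S \otimes_R A_R(G)$ simply lift those already satisfied in $A_R(G)$.
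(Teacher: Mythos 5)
Your proposal is correct and follows essentially the same route as the paper: obtain $\pi: A_S(G) \to S \otimes_R A_R(G)$ from Theorem~\ref{univ} applied to the representation $\{1_S \otimes \bm{1}_B\}$ of $\Bisc(G)$, and obtain the inverse from the universal property of the tensor product via $s \otimes f \mapsto sf$. The paper states this more tersely, but your additional verifications (that $\Bisc(G)$ satisfies (Int) and (RC) using Hausdorffness, and that the two maps are mutually inverse on generators) are exactly the details being suppressed.
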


\begin{proof}
	The set $\{1 \otimes \bm{1}_B \mid  B \in \Bisc(G)\}$ is a representation of $\Bisc(G)$ inside $S \otimes A_R(G)$. By Theorem~\ref{univ}, there is a unique $S$-algebra homomorphism $\pi: A_S(G) \to S \otimes A_R(G)$ such that $\pi(\bm{1}_B) = 1 \otimes \bm{1}_B$ for all $B \in \Bisc(G)$. On the other hand, the universal property of tensor products gives the inverse map $\sigma: S \otimes A_R(G) \to A_S(G)$, defined on simple tensors by $s \otimes f \mapsto sf$.
\end{proof}

In order to apply Theorem~\ref{univ} to products of ample groupoids, a technical lemma is needed.
\begin{lemma} \label{K-lemma}
	Let $G$ and $H$ be  Hausdorff ample groupoids. Define
	\begin{align*}
	\mathcal{K} &= \{A \times B \mid A \in \Bisc(G), B \in \Bisc({H})\}.
	\end{align*}
	 Then $\mathcal{K}$ is an inverse subsemigroup of $\Bisc(G \times H)$ and a basis for the topology on $G \times H$, such that $\mathcal{K}$ is closed under finite intersections, and for every $K,L \in \mathcal{K}$, the relative complement $K \setminus L$ is a finite union of disjoint sets in $\mathcal{K}$.
	For all $A \times B \in \mathcal{K}$, define \[t_{A \times B} = \bm{1}_A \otimes \bm{1}_B \in A_R(G)\otimes A_R({H}).\]
	Then $\{t_K \mid K \in \mathcal{K}\}$ is a representation of $\mathcal{K}$ in $A_R(G) \otimes A_R(H)$.
\end{lemma}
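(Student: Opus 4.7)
The plan is to verify the four set-theoretic claims about $\mathcal{K}$ and then the three representation axioms (R1)--(R3), treating (R3) as the main technical step.

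The first three claims are direct verifications from the identities
\[
(A_1 \times B_1)(A_2 \times B_2) = A_1 A_2 \times B_1 B_2, \quad (A \times B)^{-1} = A^{-1} \times B^{-1},
\]
\[
(A_1 \times B_1) \cap (A_2 \times B_2) = (A_1 \cap A_2) \times (B_1 \cap B_2),
\]
together with the basic fact that compactness, openness, and the bisection property are each preserved by finite intersections (using that compact sets in the Hausdorff spaces $G$ and $H$ are closed, so intersections of compact opens are compact open) and by products. Combined with the fact that $\mathcal{K}$ is literally the defining basis of the product topology, this handles subsemigroup, basis, and (Int).

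For property (RC), I would write
\[
(A_1 \times B_1) \setminus (A_2 \times B_2) = \bigl((A_1 \setminus A_2) \times B_1\bigr) \sqcup \bigl((A_1 \cap A_2) \times (B_1 \setminus B_2)\bigr),
\]
and observe that in a Hausdorff ample groupoid the relative complement of one compact open bisection by another is again a compact open bisection (openness because compact sets are closed; compactness because it is a closed subset of a compact set; bisection because it is a subset of one). So each summand lies in $\mathcal{K}$.

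For (R1) set $t_\emptyset = \bm{1}_\emptyset \otimes \bm{1}_\emptyset = 0$, and (R2) is the elementary computation $(\bm{1}_{A_1} \otimes \bm{1}_{B_1})(\bm{1}_{A_2} \otimes \bm{1}_{B_2}) = \bm{1}_{A_1 A_2} \otimes \bm{1}_{B_1 B_2}$. The main obstacle is (R3): given a finite partition $A \times B = \bigsqcup_{i=1}^n A_i \times B_i$ inside $\mathcal{K}$, we must show $\sum_i \bm{1}_{A_i} \otimes \bm{1}_{B_i} = \bm{1}_A \otimes \bm{1}_B$ in $A_R(G) \otimes A_R(H)$. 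The obstruction is that the pieces $A_i \times B_i$ need not be ``rectangularly aligned,'' so the sum does not factor in an obvious way. My strategy is to refine to a common rectangular grid: let $\{A'_j\}_{j=1}^m$ be the atoms of the Boolean algebra generated in $A$ by $A_1,\dots,A_n$, and $\{B'_k\}_{k=1}^\ell$ the analogous atoms in $B$ (each is a compact open bisection by the same Hausdorff-based argument as (RC)). Then each $A_i = \bigsqcup_{j \in J_i} A'_j$ and $B_i = \bigsqcup_{k \in K_i} B'_k$, and the disjointness plus union hypothesis forces $\{J_i \times K_i\}_{i=1}^n$ to be a partition of $\{1,\dots,m\}\times\{1,\dots,\ell\}$. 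Then
\[
\sum_i \bm{1}_{A_i} \otimes \bm{1}_{B_i} = \sum_i \sum_{(j,k)\in J_i\times K_i} \bm{1}_{A'_j} \otimes \bm{1}_{B'_k} = \sum_{j,k} \bm{1}_{A'_j}\otimes \bm{1}_{B'_k} = \Bigl(\sum_j \bm{1}_{A'_j}\Bigr)\otimes\Bigl(\sum_k \bm{1}_{B'_k}\Bigr) = \bm{1}_A \otimes \bm{1}_B,
\]
using that (R3) is known to hold inside $A_R(G)$ and $A_R(H)$ separately. This closes (R3) and completes the lemma.
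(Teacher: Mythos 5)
Your proof is correct, and its overall architecture matches the paper's: verify the set-theoretic claims directly, dispose of (R1)--(R2) by computation, and prove (R3) by refining both coordinate families to a common rectangular grid and regrouping. The differences lie in the two decompositions. For (RC) you use the two-piece splitting $\bigl((A_1\setminus A_2)\times B_1\bigr)\sqcup\bigl((A_1\cap A_2)\times(B_1\setminus B_2)\bigr)$ where the paper uses a three-piece one; both are valid, and your justification that relative complements of compact open bisections are again compact open bisections (Hausdorffness) is exactly what is needed. More substantively, for (R3) the paper refines $P=\{A : A\times B\in F\}$ into the sets $\bigcap_{A\in P,\,x\in A}A$ and asserts that these are mutually disjoint --- a claim that is false for an arbitrary finite family of sets and, as stated in the paper, silently relies on the structural constraints imposed by $F$ being a disjoint rectangular cover of a rectangle. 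Your choice of the atoms of the Boolean algebra generated by $A_1,\dots,A_n$ inside $A$ (the nonempty sets $\bigcap_{i\in I}A_i\cap\bigcap_{i\notin I}(A\setminus A_i)$) makes the disjointness automatic while each atom remains a compact open bisection, so your route buys a cleaner and more robust justification of the key step. The one place you should add a line is the assertion that $\{J_i\times K_i\}_{i=1}^n$ partitions $\{1,\dots,m\}\times\{1,\dots,\ell\}$: for existence, pick a point of $A'_j\times B'_k$, locate the unique $A_i\times B_i$ of $F$ containing it, and use that an atom meeting $A_i$ is contained in $A_i$; for uniqueness, note that $(j,k)\in (J_i\times K_i)\cap(J_{i'}\times K_{i'})$ with $i\ne i'$ would put the nonempty set $A'_j\times B'_k$ inside two disjoint members of $F$. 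With that sentence included the argument is complete.
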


\begin{proof}
	Obviously $\mathcal{K}$ is a basis for the topology on $G \times H$. For all $A_1 \times B_1, A_2 \times B_2 \in \mathcal{K}$:
	\begin{align*}
	(A_1 \times B_1)(A_2 \times B_2) = &A_1 A_2 \times B_1 B_2, \\ (A_1\times B_1)^{-1} &= A_1^{-1}\times B_1^{-1},\\ (A_1 \times B_1) \cap (A_2 \times B_2) = &(A_1 \cap A_2) \times (B_1 \cap B_2),\\
	(A_1 \times B_1) \setminus (A_2 \times B_2)
	= &\big((A_1 \setminus A_2) \times (B_1 \cap B_2)\big) \sqcup \big((A_1 \cap A_2) \times (B_1 \setminus B_2)\big) \\ & \sqcup \big((A_1 \setminus A_2)\times (B_1 \setminus B_2)\big).
	\end{align*}
	 This proves that $\mathcal{K}$ is an inverse subsemigroup  of $\Bisc(G \times H)$, with the properties (Int) and (RC).
	  
	  To prove that $\{t_K \mid K \in \mathcal{K}\}$ is a representation of $\mathcal{K}$ in $A_R(G)\otimes A_R(H)$, the first two conditions are easy to verify: \begin{enumerate}[(R1)]
	  	\item $t_\emptyset = t_{\emptyset \times \emptyset} = \bm{1}_\emptyset \otimes \bm{1}_{\emptyset} =  0$.
	  	\item For all $A \times B$, $C \times D \in \mathcal{K}$, \[t_{A \times B} t_{C \times D} = (\bm{1}_A \otimes \bm{1}_B)(\bm{1}_C \otimes \bm{1}_D) = \bm{1}_A \bm{1}_C \otimes \bm{1}_B \bm{1}_D =   \bm{1}_{AC}\otimes \bm{1}_{BD} = t_{AC \times BD} = t_{(A \times B)(C \times D)}.\]
	  \end{enumerate}
	  It remains to verify (R3). Let $F  \subseteq \mathcal{K}$ be a finite set such that $\bigcup F \in \mathcal{K}$ and  $(A \times B) \cap (A'\times B') = \emptyset$ whenever $A \times B$ and $A'\times B'$ are distinct members of $F$. We need to show that 
	  \begin{equation*}
		 \sum_{K \in F} t_K = t_{\bigcup F}.
	  \end{equation*}
	  To do so, let $P = \{A \mid A \times B \in F\}$ and $Q = \{B \mid A \times B \in F\}$. The sets that are elements of $P$ (or $Q$) are not necessarily disjoint, so we have to do some calculations that involve making them disjoint and then doing a reconciliation later.
	  Let
	  \begin{align*}
		  X = \Bigg\{ \bigcap_{A \in P, x \in A}A \ \ \Big|\ \ x \in \bigcup P\Bigg\},&&Y = \Bigg\{ \bigcap_{B \in Q, y \in B}B \ \ \Big|\ \ y \in \bigcup Q\Bigg\}.
	\end{align*}
	Both these sets $X$ and $Y$ are finite, because $P$ and $Q$ are finite, and $\bigcup X = \bigcup P$ while $\bigcup Y = \bigcup Q$. Also note that the sets that are elements of $X$ are mutually disjoint, and the same for $Y$. For all $A \in P$, we have that $A = \bigsqcup\{C \in X: C \subseteq A\}$ and for all $B \in Q$ we have $B = \bigsqcup\{D \in Y : D \subseteq B\}$.
	
	We claim that $\bigcup F = \bigcup X \times \bigcup Y$. Indeed, if $(x,y) \in \bigcup F$ then $(x,y) \in A \times B$ for some $A \times B \in F$. Then it is easy to see that $x \in \bigcap_{A \in P, x \in A}A \subseteq \bigcup X $ and $y \in \bigcap_{B \in Q, y \in B}B \subseteq \bigcup Y$. Thus $\bigcup F \subseteq \bigcup X \times \bigcup Y$.  On the other hand, suppose $(x,y) \in (\bigcup X \times \bigcup Y) \setminus \bigcup F$. Since $\bigcup F \in \mathcal{K}$, there exist sets $T \in \Bisc(G)$ and $S \in \Bisc(H)$ such that $\bigcup F = T \times S$. Then, either $x \notin T$ or $y \notin S$. If $x \notin T$ then $x \notin A$ for any $A \times B \in F$, which is a contradiction because $x \in \bigcup P$ and $\bigcup P \subseteq T$. Similarly, $y \notin S$ reaches a contradiction. Therefore $\bigcup F = \bigcup X \times \bigcup Y$. 
	
	Now, $X$ and $Y$ are sets of mutually disjoint sets, so
	\begin{align} \label{comp1}
	t_{\bigcup F} = t_{\bigcup X \times \bigcup Y} &= \bm{1}_{\bigcup X} \otimes \bm{1}_{\bigcup Y} = \Big(\sum_{C \in X} \bm{1}_{C}\Big)\otimes\Big(\sum_{D \in Y} \bm{1}_D\Big) = \sum_{C \in X}\sum_{D \in Y}  \bm{1}_C \otimes \bm{1}_D. 
	\end{align}
	On the other hand, 
	\begin{align} \label{comp2} \notag
	\sum_{K \in F} t_K = \sum_{A\times B \in F} t_{A \times B} &= \sum_{A \times B \in F} \bm{1}_A \otimes \bm{1}_B = \sum_{A \times B \in F}\bm{1}_{\bigsqcup \{C \in X\mid C \subseteq A\}}\otimes\bm{1}_{\bigsqcup \{D \in Y\mid D \subseteq B\}}
	\\
	& = \sum_{A \times B \in F} \Big(\sum_{\substack{C \in X \\ C \subseteq A}} \bm{1}_C\Big) \otimes  \Big(\sum_{\substack{D \in Y \\ D \subseteq B}} \bm{1}_D\Big) = \sum_{A \times B \in F}\Big(\sum_{\substack{C \in X \\ C \subseteq A}} \sum_{\substack{D \in Y \\ D \subseteq B}} \bm{1}_{C}\otimes \bm{1}_D \Big).
	\end{align}
	Note that for $C \in X$ and $A \in P$,  $C \cap A \ne \emptyset$ implies $C \subseteq A$. Similarly, for $D \in Y$ and $B \in Q$,  $D \cap B \ne \emptyset$ implies $D \subseteq B$. Given a pair $C \in X$ and $D \in Y$, we have that $C \times D \subseteq \bigcup X \times \bigcup Y = \bigcup F$. It follows that $(C \times D) \cap (A \times B) \ne \emptyset$ for some $A \times B \in F$, and therefore $C \times D \subseteq A \times B$. Moreover, this set $A \times B$ is the unique element of $F$ that contains $C \times D$ as a subset, because all the elements of $F$ are mutually disjoint. We conclude that for each pair  $(C,D)$ with $C \in X$ and $D \in Y$ there exists a unique $A \times B \in F$ such that $C \subseteq A$ and $D \subseteq B$. Therefore, continuing from (\ref{comp1}) and (\ref{comp2}) we prove that (R3) holds:
	\[
	\sum_{K \in F} t_K = \sum_{A \times B \in F}\Big(\sum_{\substack{C \in X \\ C \subseteq A}} \sum_{\substack{D \in Y \\ D \subseteq B}} \bm{1}_{C}\otimes \bm{1}_D \Big) = \sum_{C \in X} \sum_{D \in Y} \bm{1}_C \otimes \bm{1}_D = t_{\bigcup F}.
	\]
\end{proof}

And now for the main result.

\begin{theorem} \label{tensor}
	Let $G$ and $H$ be Hausdorff ample groupoids. There is a $*$-isomorphism
	\begin{equation*}
	\sigma: A_R(G) \otimes A_R(H) \to A_R(G \times H) 	\end{equation*}
	such that $\sigma\big(D_R(G)\otimes D_R(H)\big) = D_R(G\times H)$.
\end{theorem}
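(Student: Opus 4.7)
The strategy is to produce mutually inverse $R$-algebra homomorphisms between $A_R(G \times H)$ and $A_R(G) \otimes A_R(H)$ using the two universal properties now at our disposal: the universal property of Steinberg algebras (Theorem \ref{univ}) and the universal property of the tensor product. The $*$-compatibility and diagonal-preservation will then follow by verification on generators. For one direction, I would invoke Lemma \ref{K-lemma}, which shows that the subsemigroup $\mathcal{K} = \{A \times B : A \in \Bisc(G),\, B \in \Bisc(H)\}$ of $\Bisc(G \times H)$ meets every hypothesis of Theorem \ref{univ} and that $\{\bm{1}_A \otimes \bm{1}_B\}_{A \times B \in \mathcal{K}}$ is a representation of $\mathcal{K}$ in $A_R(G) \otimes A_R(H)$. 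Theorem \ref{univ} then produces a unique $R$-algebra homomorphism $\pi: A_R(G \times H) \to A_R(G) \otimes A_R(H)$ with $\pi(\bm{1}_{A \times B}) = \bm{1}_A \otimes \bm{1}_B$.

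For the other direction, the map $\mu: A_R(G) \times A_R(H) \to A_R(G \times H)$ defined by $\mu(f, g)(x, y) = f(x)\, g(y)$ is $R$-bilinear and sends $(\bm{1}_A, \bm{1}_B)$ to $\bm{1}_{A \times B} \in A_R(G \times H)$, so it factors through the tensor product to yield $\sigma: A_R(G) \otimes A_R(H) \to A_R(G \times H)$ with $\sigma(\bm{1}_A \otimes \bm{1}_B) = \bm{1}_{A \times B}$. Multiplicativity of $\sigma$ reduces by $R$-bilinearity to the identity $\bm{1}_{A \times B} * \bm{1}_{C \times D} = \bm{1}_{AC \times BD}$, which is immediate from the product rule $(A \times B)(C \times D) = AC \times BD$ in $\Bisc(G \times H)$. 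Since $\sigma \circ \pi$ and $\pi \circ \sigma$ fix the respective generating sets, $\sigma$ and $\pi$ are mutually inverse.

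It then remains to verify $*$-compatibility and the diagonal condition on generators. For the involution, $\sigma((\bm{1}_A \otimes \bm{1}_B)^*) = \sigma(\bm{1}_{A^{-1}} \otimes \bm{1}_{B^{-1}}) = \bm{1}_{(A \times B)^{-1}} = \sigma(\bm{1}_A \otimes \bm{1}_B)^*$, which extends to all tensors by additivity and $R$-semilinearity of the two involutions. Clearly $\sigma$ carries each generator $\bm{1}_U \otimes \bm{1}_V$ (with $U \in \Bisc(G^{(0)})$ and $V \in \Bisc(H^{(0)})$) into $D_R(G \times H)$; for the converse inclusion, every compact open $W \subseteq G^{(0)} \times H^{(0)}$ can, by compactness and the definition of the product topology, be written as a finite disjoint union $W = \bigsqcup_i U_i \times V_i$ of sets of the required form, so $\bm{1}_W = \sum_i \sigma(\bm{1}_{U_i} \otimes \bm{1}_{V_i})$ lies in $\sigma(D_R(G) \otimes D_R(H))$. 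The only genuinely delicate step in the whole argument is the verification of (R3) inside Lemma \ref{K-lemma}: the $G$-factors of a disjoint union $\bigsqcup (A_i \times B_i) \in \mathcal{K}$ are typically not disjoint in $G$, requiring a disjointification procedure before one can compare characteristic functions. Once that lemma is in hand, Theorem \ref{tensor} itself is essentially bookkeeping on generators.
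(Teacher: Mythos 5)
Your proposal is correct and follows essentially the same route as the paper: the bilinear map $(f,g)\mapsto\big((x,y)\mapsto f(x)g(y)\big)$ induces $\sigma$, while Lemma \ref{K-lemma} together with Theorem \ref{univ} produces the inverse $\pi$, and both maps are identified as mutually inverse by checking them on the spanning sets $\{\bm{1}_A\otimes\bm{1}_B\}$ and $\{\bm{1}_{A\times B}\}$. The only difference is that you spell out the $*$-compatibility and the surjectivity onto $D_R(G\times H)$ (via disjointification of compact open subsets of $G^{(0)}\times H^{(0)}$ into rectangles), steps the paper dismisses as clear.
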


\begin{proof}
	Define the map $\widetilde \sigma: A_R(G) \times A_R(H) \to A_R(G\times H)$ that sends $(f,g)$ to $\widetilde \sigma(f,g) \in A_R(G \times H)$ where $\widetilde \sigma (f,g)(x,y) = f(x)g(y)$. Clearly $\widetilde \sigma$ is bilinear, so it induces a linear map:
	\begin{align*}
	\sigma: A_R(G)\otimes A_R(H) \to A_R(G \times H), && \sigma (f \otimes g) (x,y) = f(x)g(y).
	\end{align*}
	If $A \in \Bisc(G)$ and $B \in \Bisc(H)$ then it is clear that $\sigma(\bm{1}_A \otimes \bm{1}_B) = \bm{1}_{A \times B}$.
	By Lemma~\ref{K-lemma} and Theorem~\ref{univ}, there exists a unique $R$-algebra homomorphism $\pi: A_R(G \times H) \to A_R(G) \otimes A_R(H)$ such that $\pi(\bm{1}_{A \times B}) =  \bm{1}_A \otimes \bm{1}_B$ for all $A \times B \in \mathcal{K}$. Then $\pi$ and $\sigma$ are mutually inverse isomorphisms of $R$-algebras. It is clear that the isomorphisms are $*$-preserving (see (\ref{invol})) and diagonal-preserving.
\end{proof}


We briefly discuss the graded situation.
If $G$ is $\Gamma$-graded and $H$ is $\Delta$-graded, then $A_R(G \times H)$ is $(\Gamma \times \Delta)$-graded in the sense of Example~\ref{gr-example}~(1) and (3), and $A_R(G) \otimes A_R(H)$ is $(\Gamma \times \Delta)$-graded in the sense of Example~\ref{gr-example}~(2). In this case, the isomorphism from Theorem \ref{tensor} is a $(\Gamma \times \Delta)$-graded isomorphism. If $\Gamma = \Delta$ is abelian, then $G \times H$ and $A_R(G)\otimes A_R(H)$ can both be $\Gamma$-graded, as described in Example~\ref{gr-example}, and the isomorphism from Theorem \ref{tensor} is $\Gamma$-graded.

%
%
%
%

\section{$*$-isomorphisms of Steinberg algebras over $\ZZ$} \label{s: stars}

In this section, we are interested in the case where $R$ is a unital subring of $\mathbb{C}$ and the involution $\overline{\phantom{h}}$ is complex conjugation. Given a unital subring $R$ of $\mathbb{C}$ that is closed under complex conjugation, we shall call $R$ \textit{kind} if for all $\lambda_0, \dots, \lambda_n \in R$,
\[
\lambda_0 = |\lambda_0|^2 + \sum_{i = 1}^n |\lambda_i|^2 \ \text{ implies }\ \lambda_1 = \dots = \lambda_n = 0.
\]
The above definition is from \cite{carlsen}. For example, $\ZZ$ and $\ZZ[i]$ are kind. In \cite{carlsen} and \cite[Example 2.11]{JS}, there are some additional examples and non-examples of kind $*$-subrings of $\mathbb{C}$.

Recall from (\ref{invol}) that $A_R(G)$ is a $*$-algebra.  In a $*$-algebra $A$,  an element $a \in A$ is called a \textit{projection} if $a = a^2 = a^*$. A special case of Proposition \ref{kind} is proved for Leavitt path algebras in \cite[Proposition~4]{carlsen}. The proof here is surprisingly short, and it demonstrates the importance of kindness.

\begin{proposition}  \label{kind}
	Let $G$ be an ample groupoid and let $R$ be a kind, unital $*$-subring of $\mathbb{C}$. If $p \in A_R(G)$ is a projection, then $p \in D_R(G)$.
\end{proposition}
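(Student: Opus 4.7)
The plan is to show directly that $p$ vanishes off of $G^{(0)}$, which will put $p$ in the diagonal since $D_R(G) = \{f \in A_R(G) \mid \supp(f)\subseteq G^{(0)}\}$.

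First, I would evaluate the two defining conditions $p = p^*$ and $p = p*p$ pointwise. The involution condition (\ref{invol}) gives $p(y^{-1}) = \overline{p(y)}$ for all $y \in G$. For a unit $u \in G^{(0)}$, a pair $(z,y) \in G^{(2)}$ satisfies $zy = u$ exactly when $\src(y) = u$ and $z = y^{-1}$. Hence the convolution formula (\ref{convolve}) collapses to
\[
p(u) \;=\; (p*p)(u) \;=\; \sum_{y \in G_u} p(y^{-1})\, p(y) \;=\; \sum_{y \in G_u} |p(y)|^2.
\]
The sum is finite because $p$ is a finite $R$-linear combination of indicator functions of compact open bisections, and each bisection meets $G_u$ in at most one point.

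Next I would separate the term corresponding to $y = u$ from the remaining terms, writing
\[
p(u) \;=\; |p(u)|^2 \,+\! \sum_{y \in G_u,\, y \ne u} |p(y)|^2.
\]
Since $p(u)=\overline{p(u)}\in R$, this is exactly the hypothesis of the kindness condition with $\lambda_0 = p(u)$ and the $\lambda_i$ ranging over the values $p(y)$ for $y\in G_u\setminus\{u\}$. Invoking kindness, I conclude that $p(y)=0$ for every $y \in G_u$ with $y \ne u$.

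Finally, letting $u$ range over all of $G^{(0)}$, every $y \in G \setminus G^{(0)}$ is caught (take $u = \src(y)$; then $y \in G_u$ and $y \ne u$), so $p(y)=0$. Thus $\supp(p) \subseteq G^{(0)}$, which gives $p \in D_R(G)$. The only subtle step is the appeal to kindness; the rest is a short manipulation of the convolution and involution formulas.
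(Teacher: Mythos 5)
Your proof is correct and follows essentially the same route as the paper: evaluate $p = p*p = p*p^*$ at a unit $u$, collapse the convolution sum over $G_u$ to $|p(u)|^2 + \sum_{y \in G_u,\, y\ne u}|p(y)|^2$, and invoke kindness to kill the off-diagonal terms. The only cosmetic difference is that you combine $p=p^*$ with $p*p$ where the paper writes $p*p^*$ directly; these are identical computations.
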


\begin{proof}
	Given that $p \in A_R(G)$ is a projection, we have $p = p*p = p^* = p*p^*$. For any unit $u \in G^{(0)}$,	\begin{equation*} 
	p(u) = p * p^*(u) = \sum_{y \in G_u}p(y^{-1})p^*(y) = \sum_{y \in G_u} p(y^{-1}) \overline{p(y^{-1})} = |p(u)|^2 + \sum_{ y \in G_u \setminus\{u\}} |p(y^{-1})|^2.
	\end{equation*}
	As usual, there are only finitely many $y \in G_u$ with $p(y^{-1}) \ne 0$. Since $R$ is kind, this implies $p(y^{-1}) = 0$ for all $y \in G_u\setminus\{u\}$. But $u$ was arbitrary, so $p(z) = 0$ for all $z \in G \setminus G^{(0)}$. Therefore $p \in D_R(G)$.
\end{proof}

\begin{corollary} \label{corr 1}
	Let $R$ be a kind, unital $*$-subring of $\CC$, and let $G$ and $H$ be ample groupoids. If $\Phi:~A_R(G) \to A_R(H)$ is a homomorphism of $*$-rings, then $\Phi(D_R(G)) \subseteq D_R(H)$.
\end{corollary}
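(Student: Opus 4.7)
The plan is to deduce this directly from Proposition \ref{kind}. The diagonal subalgebra $D_R(G)$ is generated, as an $R$-module, by the characteristic functions $\bm{1}_U$ for $U \in \Bisc(G^{(0)})$, so it suffices to show that each $\Phi(\bm{1}_U)$ lands in $D_R(H)$. The key observation is that each such generator is already a projection in $A_R(G)$: idempotency holds because $U \subseteq G^{(0)}$ forces $U \cdot U = U$ (every unit is idempotent), whence $\bm{1}_U * \bm{1}_U = \bm{1}_{UU} = \bm{1}_U$; self-adjointness holds because $U^{-1} = U$ on the unit space and the coefficient $1$ is fixed by complex conjugation, so $\bm{1}_U^*(x) = \overline{\bm{1}_U(x^{-1})} = \bm{1}_U(x)$.

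Since $\Phi$ is a $*$-ring homomorphism, the image $\Phi(\bm{1}_U)$ satisfies $\Phi(\bm{1}_U)^2 = \Phi(\bm{1}_U^2) = \Phi(\bm{1}_U)$ and $\Phi(\bm{1}_U)^* = \Phi(\bm{1}_U^*) = \Phi(\bm{1}_U)$, so it is a projection in $A_R(H)$. Proposition \ref{kind}, which is exactly where the kindness hypothesis on $R$ is consumed, then gives $\Phi(\bm{1}_U) \in D_R(H)$. Using the $R$-linearity of $\Phi$ (implicit in its compatibility with the $R$-algebra structure on the Steinberg algebras), every $R$-linear combination of the $\bm{1}_U$'s maps into $D_R(H)$, giving $\Phi(D_R(G)) \subseteq D_R(H)$.

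There is no real obstacle here: all the substantive work was done in Proposition \ref{kind}, and this corollary is essentially a packaging statement. The only point that deserves a moment of care is the implicit use of $R$-linearity of $\Phi$ when extending from the generating projections to arbitrary elements of the diagonal; this is standard, but worth flagging.
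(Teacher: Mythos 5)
Your proof is correct and is essentially the paper's own argument: the generators $\bm{1}_U$ for $U \in \Bisc(G^{(0)})$ are projections, a $*$-ring homomorphism sends projections to projections, and Proposition~\ref{kind} places their images in $D_R(H)$. The paper's proof is the same three lines, omitting only your (reasonable) remark about extending from the generating projections to all of $D_R(G)$ by linearity.
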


\begin{proof}
	If $U \in \Bisc(G^{(0)})$ then $\bm{1}_U$ is a projection in $A_R(G)$. Since $\Phi$ is $*$-preserving, $\Phi(\bm{1}_U)$ is a projection in $A_R(H)$ and therefore it belongs to the diagonal subalgebra $D_R(H)$.
\end{proof}

\begin{corollary} \label{corr 2}
	Let $G$ and $H$ be Hausdorff ample groupoids such that $G$ is effective. The following are equivalent:
	\begin{enumerate}[\rm (1)]
		\item There is a topological isomorphism $\varphi: G \to H$,
		\item There is a $*$-ring isomorphism $\Phi: A_\ZZ(G) \to A_\ZZ(H)$,
		\item There is a ring isomorphism $\Phi: A_\ZZ(G) \to A_\ZZ(H)$ such that $\Phi(D_\ZZ(G)) = D_\ZZ(H)$,
		\item There is a ring isomorphism $\Phi: A_\ZZ(G) \to A_\ZZ(H)$ such that $\Phi(D_\ZZ(G)) \subseteq D_\ZZ(H)$.
	\end{enumerate}

	The statement remains true if $\ZZ$ is replaced by any kind, unital $*$-subring of $\CC$.\end{corollary}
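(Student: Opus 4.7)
The plan is to close the cycle $(1) \Rightarrow (2) \Rightarrow (3) \Rightarrow (4) \Rightarrow (1)$. For $(1) \Rightarrow (2)$, I would construct $\Phi: A_\ZZ(G) \to A_\ZZ(H)$ from a topological groupoid isomorphism $\varphi: G \to H$ by pullback, $\Phi(f) = f \circ \varphi^{-1}$. Since $\varphi$ is a groupoid homeomorphism, it sends $\Bisc(G)$ bijectively onto $\Bisc(H)$, so $\Phi$ maps generators $\bm{1}_B$ to generators $\bm{1}_{\varphi(B)}$, and it is routine to verify that it is a $*$-ring isomorphism preserving the convolution product and the involution in (\ref{invol}).

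For $(2) \Rightarrow (3)$, I would apply Corollary \ref{corr 1} to $\Phi$ to obtain $\Phi(D_\ZZ(G)) \subseteq D_\ZZ(H)$, and separately to $\Phi^{-1}$ (which is also a $*$-ring isomorphism) to get $\Phi^{-1}(D_\ZZ(H)) \subseteq D_\ZZ(G)$; these combine to give $\Phi(D_\ZZ(G)) = D_\ZZ(H)$. The implication $(3) \Rightarrow (4)$ is immediate.

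The substantive step is $(4) \Rightarrow (1)$. Suppose $\Phi: A_\ZZ(G) \to A_\ZZ(H)$ is a ring isomorphism with $\Phi(D_\ZZ(G)) \subseteq D_\ZZ(H)$. The first move is to upgrade this inclusion to an equality, using effectivity. Since $G$ is effective, $D_\ZZ(G)$ is a maximal commutative subalgebra of $A_\ZZ(G)$ (as recorded in the preliminaries). The set $\Phi^{-1}(D_\ZZ(H))$ is a commutative subalgebra of $A_\ZZ(G)$ containing $D_\ZZ(G)$, so maximality forces $\Phi^{-1}(D_\ZZ(H)) = D_\ZZ(G)$, i.e.\ $\Phi(D_\ZZ(G)) = D_\ZZ(H)$. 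With $\Phi$ now a diagonal-preserving ring isomorphism and $G$ effective, I would invoke the groupoid reconstruction theorem (Steinberg's \cite[Corollary 5.8]{steinberg-new}, or equivalently \cite[Corollary 3.2]{carlsen2017diagonal} or \cite[Theorem 3.1]{ara2017reconstruction}) to extract a topological groupoid isomorphism $\varphi: G \to H$.

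The only step doing any work is $(4) \Rightarrow (1)$: everything else is bookkeeping, and the main obstacle is essentially delegated to the reconstruction theorem. A minor subtlety is that $H$ is not assumed to be effective, but the cited reconstruction results recover $H$ from the pair $(A_\ZZ(H), D_\ZZ(H))$ as soon as one of the two groupoids is effective, so the argument goes through. For the final sentence of the corollary, the same chain of implications applies verbatim when $\ZZ$ is replaced by any kind, unital $*$-subring $R$ of $\CC$, since every ingredient used -- Corollary \ref{corr 1}, maximal commutativity of the diagonal for effective groupoids, and the reconstruction theorems -- is valid in that generality.
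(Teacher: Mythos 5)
Your proof is correct and follows essentially the same route as the paper: both arguments obtain the $*$-ring implication from Corollary \ref{corr 1} (via Proposition \ref{kind}) and delegate the passage from a diagonal-preserving isomorphism back to a groupoid isomorphism to the reconstruction theorem \cite[Corollary 5.8]{steinberg-new}. The only cosmetic differences are that you arrange the implications as a cycle and spell out the maximal-commutativity upgrade from $\Phi(D_\ZZ(G)) \subseteq D_\ZZ(H)$ to equality, a step the paper leaves inside the citation.
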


\begin{proof}
	The equivalence of (1), (3), and (4) is proved in \cite[Corollary~5.8]{steinberg-new}. Clearly (1) implies (2). By Corollary \ref{corr 1},  (2) implies~(4).
\end{proof}

\section{Tensor products of Leavitt algebras} \label{s: leavitts}

We recall the definitions of Leavitt algebras \cite{leavitt1962module} and Cuntz groupoids~\cite{renault1980groupoid}. In short, $L_{n,R}$ is the Leavitt path algebra of the graph with one vertex and $n$ edges, and $\mathbb{O}_n$ is the boundary path groupoid of the same graph. It is well-known (see \cite{clark3,rigby}) that $A_R(\mathbb{O}_n) \cong L_{n,R}$, and the isomorphism restricts to $D_R(\mathbb{O}_n)  \cong D_{n,R}$.  Complete, classical definitions are given below.

\begin{definition}[Leavitt algebras]
	Let $R$ be a commutative unital ring, and let $2 \le n < \infty$. Define the \textit{Leavitt algebra of type $(1,n)$} to be the universal unital $R$-algebra $L_{n,R}$  generated by symbols $e_1, \dots, e_n, e_1^*, \dots, e_n^*$, subject to the relations:
	\begin{align*}
		e_i^* e_j = \delta_{ij}, &&
		\sum_{i = 1}^n{e_i e_i^*} = 1.
	\end{align*}
\end{definition}
	The \textit{diagonal subalgebra} of $L_{n,R}$ is the $R$-subalgebra $D_{n,R}$ generated by commuting idempotents of the form: \begin{align*}
	e_{i_1}\dots e_{i_k}e^*_{i_k} \dots e^*_{i_1}, && 1 \le i_1, \dots, i_k \le n.
	\end{align*}
		The algebra $L_{n,R}$ is universal among $R$-algebras $A$ with the property that $A^n \cong A$ as left $A$-modules. If $K$ is any field, $L_{n,K}$ is central, simple,  and infinite-dimensional. The diagonal subalgebra $D_{n,R}$ is a maximal commutative subalgebra of $L_{n,R}$ \cite[Corollary~2.4]{steinberg-new}. 
Given an involution $\overline{\phantom{f}}: R \to R$, there is a {canonical anti-linear involution}  $^*:L_{n,R} \to L_{n,R}$ given by the additive extension of the rule:
	\[
	re_{i_1} \dots e_{i_k}e^*_{j_{l}} \dots e^*_{j_{1}} \mapsto \overline{r}e_{j_1}\dots e_{j_l} e^*_{i_k} \dots e^*_{i_1}.
	\]

\begin{definition}[Cuntz groupoids]
	Let $2 \le n < \infty$. Let $W = \{1,\dots, n\}^\mathbb{N}$, and define the \textit{Cuntz groupoid}
	\[
	\mathbb{O}_n = \{(x,k,y) \in W \times \mathbb{Z}\times W\mid  x_{i+k} = y_i \text{ for all but finitely many } i \}.
	\]
	The unit space is $\{(x,0,x) \mid x \in W\}$, which is naturally identified with $W$, and the structure maps are:
	\begin{align*} 
		\src(x,k,y) = y, &&(x,k,y)^{-1} = (y,-k,x), \\
		\ran(x,k,y) = x,  &&
		(x,k,y)(y,l,z) = (x,k+l, z).\notag
	\end{align*}
	The topology on $\mathbb{O}_n$ is generated by the basis of open sets:
	\begin{align*}
		Z(a_1 \dots a_\ell, b_1\dots b_m) = \{(x,\ell-m,y) \in \mathbb{O}_n\mid \ x_1\dots x_\ell = a_1 \dots a_\ell,\ y_1 \dots y_m = b_1 \dots b_m \}
	\end{align*}
	where $a_1, \dots, a_\ell, b_1, \dots, b_m \in \{1, \dots, n\}$. 	With respect to this topology, $\mathbb{O}_n$ is a second-countable, $\sigma$-compact, minimal, and effective Hausdorff ample groupoid.
%
\end{definition}

%

%

Recall that a unital commutative ring is  \textit{indecomposable} if 0 and 1 are the only idempotents. For an example, take any integral domain.

\begin{proposition} \label{no-diag-iso}
	Let $R$ be indecomposable, and let $(n_1, \dots, n_k)$ and $(m_1, \dots, m_l)$ be increasing tuples of integers $\ge 2$. If there exists an isomorphism
	\[
	\varphi: \bigotimes_{i = 1}^kL_{n_i, R} \to \bigotimes_{j = 1}^l L_{m_j, R}
	\]
	 such that
	$\varphi
	\left(\bigotimes_{i = 1}^kD_{n_i,R}\right) \subseteq  \bigotimes_{j = 1}^l D_{m_j,R}$, then $k = l$ and $(n_1, \dots, n_k) = (m_1, \dots, m_k)$.
\end{proposition}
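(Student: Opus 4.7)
The plan is to convert the purely algebraic hypothesis into a topological groupoid isomorphism and then quote a unique-factorisation theorem for Cuntz groupoids. Using the well-known identifications $A_R(\mathbb{O}_n) \cong L_{n,R}$ and $D_R(\mathbb{O}_n) \cong D_{n,R}$, and iterating the diagonal-preserving $*$-isomorphism of Theorem~\ref{tensor}, I would rewrite $\varphi$ as a ring isomorphism
\[
\Phi : A_R\bigl(\mathbb{O}_{n_1}\times\cdots\times\mathbb{O}_{n_k}\bigr) \longrightarrow A_R\bigl(\mathbb{O}_{m_1}\times\cdots\times\mathbb{O}_{m_l}\bigr)
\]
that sends $D_R(\mathbb{O}_{n_1}\times\cdots\times\mathbb{O}_{n_k})$ into $D_R(\mathbb{O}_{m_1}\times\cdots\times\mathbb{O}_{m_l})$.

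Next, I would check that each of the product groupoids above is an effective Hausdorff ample groupoid. Hausdorffness and ampleness of a product of Hausdorff ample groupoids were already recorded in Section~\ref{s: prelim}. Effectiveness follows from $\Iso(G\times H) = \Iso(G)\times\Iso(H)$ and from $(A\times B)^\circ = A^\circ \times B^\circ$ in the product topology, applied inductively to the effective groupoids $\mathbb{O}_{n_i}$. With the domain and codomain groupoids effective and $R$ indecomposable, the hypotheses of Steinberg's reconstruction theorem \cite[Corollary~5.8]{steinberg-new} are satisfied, producing a topological groupoid isomorphism
\[
\mathbb{O}_{n_1}\times\cdots\times\mathbb{O}_{n_k} \;\cong\; \mathbb{O}_{m_1}\times\cdots\times\mathbb{O}_{m_l}.
\]

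Finally, since $\mathbb{O}_n$ is the groupoid of the full one-sided shift on $n$ symbols, Matui's classification of products of SFT groupoids \cite[Theorem~5.12]{matui} forces $k=l$ and the multisets $\{n_i\}$ and $\{m_j\}$ to coincide; as the tuples are taken to be increasing, this yields $(n_1,\dots,n_k)=(m_1,\dots,m_k)$. The main work in the plan is outsourced to the two black-box theorems of Steinberg and Matui; the only genuine verifications are that effectiveness passes to products of ample groupoids and that the identifications and diagonal-preservation line up correctly when the $k$-fold tensor product is unfolded one factor at a time via Theorem~\ref{tensor}. I do not expect either of these bookkeeping steps to pose a serious obstacle.
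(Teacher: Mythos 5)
Your proposal matches the paper's proof essentially step for step: translate $\varphi$ through $A_R(\mathbb{O}_n)\cong L_{n,R}$ and Theorem~\ref{tensor} into a diagonal-preserving isomorphism of Steinberg algebras of product groupoids, invoke effectiveness of the products together with \cite[Corollary~5.8]{steinberg-new} to get a topological isomorphism of the groupoids, and finish with \cite[Theorem~5.12]{matui}. The only cosmetic difference is that the paper extracts $k=l$ separately by comparing isotropy groups ($\ZZ^k$ versus $\ZZ^l$) and phrases the last step via Bowen--Franks groups before citing Matui, whereas you attribute both conclusions directly to Matui's classification; this does not change the substance of the argument.
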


\begin{proof}
	Suppose that such an isomorphism $\varphi$ exists. Then there is an isomorphism
	\[
	\theta: \bigotimes_{i = 1}^k A_R(\mathbb{O}_{n_i})  \to \bigotimes_{j = 1}^l A_R(\mathbb{O}_{m_j})
	\]
	such that
	$
	\theta\left(\bigotimes_{i = 1}^nD_R(\mathbb{O}_{n_i})\right) \subseteq \bigotimes_{j = 1}^l D_R(\mathbb{O}_{m_j}).
	$
	By Theorem \ref{tensor}, there exists an isomorphism 
	\[
	\gamma:  A_R\left(\prod_{i = 1}^k\mathbb{O}_{n_i} \right) \to A_R\left(\prod_{j = 1}^l\mathbb{O}_{m_j}\right)
	\]	such that $\gamma\left(D_R\Big( \prod_{i = 1}^k\mathbb{O}_{n_i}\Big)\right) \subseteq  D_R\Big(\prod_{j = 1}^l\mathbb{O}_{m_j}\Big)$.
	The groupoid $\mathbb{O}_{n}$ is effective for any $n$, and a product of effective groupoids is effective. Then, \cite[Corollary~5.8]{steinberg-new} implies that $\mathbb{O}_{n_1} \times \dots \times \mathbb{O}_{n_k}$ and $\mathbb{O}_{m_1} \times \dots \times \mathbb{O}_{m_l}$ are isomorphic as topological groupoids.	Since every unit in $\mathbb{O}_{n_1} \times \dots \times \mathbb{O}_{n_k}$ has isotropy group $\ZZ^k$ and every unit in $\mathbb{O}_{m_1} \times \dots \times \mathbb{O}_{m_l}$ has isotropy group $\ZZ^l$, deduce that $k = l$. The adjacency matrix of the directed graph with one vertex and $n$ edges is the $1 \times 1$ matrix $[n]$. The Bowen-Franks group associated to $[n]$ is defined (see \cite{JS,matui}) as $\operatorname{BF}([n])=\ZZ/(n-1)\ZZ$. Therefore, $\operatorname{BF}([n_k]) \cong \operatorname{BF}([m_k])$ if and only if $n_k = m_k$. By \cite[Theorem~5.12]{matui}, $\mathbb{O}_{n_1} \times \dots \times \mathbb{O}_{n_k} \cong \mathbb{O}_{m_1} \times \dots \times \mathbb{O}_{m_l}$ implies $(n_1, \dots, n_k) = (m_1, \dots, m_k)$.
\end{proof}

From Corollary \ref{corr 2} and Proposition \ref{no-diag-iso}, we find that there are no unexpected $*$-isomorphisms between tensor products of integral Leavitt algebras.

\begin{corollary} \label{corr 3}
	Let $(n_1, \dots, n_k)$ and $(m_1, \dots, m_l)$ be increasing tuples of integers $\ge 2$. If there exists a $*$-ring isomorphism
	\[
	\varphi: \bigotimes_{i = 1}^kL_{n_i, \ZZ} \to \bigotimes_{j = 1}^l L_{m_j, \ZZ}
	\]
	then $k = l$ and $(n_1, \dots, n_k) = (m_1, \dots, m_k)$. The same statement holds if $\ZZ$ is replaced by any kind, unital $*$-subring of $\CC$.
\end{corollary}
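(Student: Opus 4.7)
The plan is to chain together the results of Sections \ref{s: tensors}, \ref{s: stars}, and the previous proposition, exploiting the identification of Leavitt algebras with Steinberg algebras of Cuntz groupoids. Suppose a $*$-ring isomorphism $\varphi: \bigotimes_{i=1}^k L_{n_i,\ZZ} \to \bigotimes_{j=1}^l L_{m_j,\ZZ}$ is given. Since $L_{n,\ZZ} \cong A_\ZZ(\mathbb{O}_n)$ as $*$-algebras in a way that identifies $D_{n,\ZZ}$ with $D_\ZZ(\mathbb{O}_n)$, we can transport $\varphi$ to a $*$-ring isomorphism between the corresponding tensor products of Steinberg algebras.

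Next, I would apply Theorem \ref{tensor} iteratively (noting that Cuntz groupoids are Hausdorff and ample) to obtain a $*$-ring isomorphism
\[
\gamma: A_\ZZ\Big( \textstyle\prod_{i=1}^k \mathbb{O}_{n_i}\Big) \to A_\ZZ\Big(\textstyle\prod_{j=1}^l \mathbb{O}_{m_j}\Big).
\]
The key point is now to show that $\gamma$ must automatically preserve the diagonal. This is exactly where kindness of $\ZZ$ enters: by Corollary \ref{corr 1}, every $*$-ring homomorphism between integral Steinberg algebras maps the diagonal subalgebra into the diagonal subalgebra. Applying this to both $\gamma$ and $\gamma^{-1}$ gives $\gamma\big(D_\ZZ(\prod_i \mathbb{O}_{n_i})\big) = D_\ZZ(\prod_j \mathbb{O}_{m_j})$.

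Unwinding back through the isomorphisms from Theorem \ref{tensor} (which preserve the diagonal) and the identification $L_{n,\ZZ} \cong A_\ZZ(\mathbb{O}_n)$ yields a diagonal-preserving ring isomorphism $\bigotimes_{i=1}^k L_{n_i,\ZZ} \to \bigotimes_{j=1}^l L_{m_j,\ZZ}$. Since $\ZZ$ is indecomposable, Proposition \ref{no-diag-iso} applies and forces $k = l$ together with $(n_1,\dots,n_k) = (m_1,\dots,m_k)$. For the final sentence, I would simply note that the entire argument depends on $\ZZ$ only through (i) being an indecomposable unital commutative ring (needed for Proposition \ref{no-diag-iso}) and (ii) being a kind $*$-subring of $\CC$ (needed for Corollary \ref{corr 1}); any kind unital $*$-subring of $\CC$ is indecomposable because $\CC$ is, so both hypotheses are simultaneously available and the same proof goes through verbatim.

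I do not expect any serious obstacle: all the hard work has already been done in the earlier sections, and this corollary is essentially a bookkeeping exercise. The only subtle point worth being careful about is ensuring that the transport between $\bigotimes L_{n_i,R}$ and $A_R(\prod \mathbb{O}_{n_i})$ respects both the involution and the diagonal at every stage, so that Corollary \ref{corr 1} can legitimately be invoked and its conclusion fed into Proposition \ref{no-diag-iso}.
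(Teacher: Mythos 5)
Your proof is correct and follows essentially the same route as the paper, which deduces the corollary from Corollary \ref{corr 2} (whose implication from $*$-isomorphism to diagonal-preservation is exactly the appeal to Corollary \ref{corr 1} you make) together with Proposition \ref{no-diag-iso}. Your observations that the identification $L_{n,\ZZ}\cong A_\ZZ(\mathbb{O}_n)$ and the isomorphism of Theorem \ref{tensor} are both $*$- and diagonal-preserving, and that every unital subring of $\CC$ is indecomposable, correctly fill in the bookkeeping the paper leaves implicit.
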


In particular, there is no $*$-isomorphism $L_{2, \ZZ}\otimes L_{3, \ZZ} \to L_{2, \ZZ} \otimes L_{2, \ZZ}$, which is some partial progress on the question posed in \cite[Question~7.3.4.]{LPAbook}.


\end{document}